\definecolor{r}{rgb}{1,0,0}
\definecolor{b}{rgb}{0,0,1}
 \newcommand{\tr}[1]{}
\newcommand{\lie}[2]{\left[#1,#2\right]}
\newcommand{\beqn}{\begin{eqnarray}\begin{aligned}}
\newcommand{\eqn}{\end{aligned}\end{eqnarray}}
\newcommand{\id}{\mathbf{I}}
\theoremstyle{definition}
\newtheorem{thm}{Theorem}[section]
\newtheorem{res}{Result}
\newtheorem{lem}[thm]{Lemma}
\theoremstyle{definition}
\newtheorem{mydef}[thm]{Definition}
\title{Lie-Markov models derived from finite semigroups}
\author{Jeremy G. Sumner and Michael D. Woodhams}
\begin{document}

\maketitle

\begin{center}
\footnotesize{School of Physical Sciences, University of Tasmania, Australia}\\
\end{center}

\begin{abstract}
\noindent
We present and explore a general method for deriving a Lie-Markov model from a finite semigroup.
If the degree of the semigroup is $k$, the resulting model is a continuous-time Markov chain on $k$ states and, as a consequence of the product rule in the semigroup, satisfies the property of multiplicative closure.
This means that the product of any two probability substitution matrices taken from the model produces another substitution matrix also in the model.
We show that our construction is a natural generalization of the concept of group-based models. 
\end{abstract}

\section{Introduction} 

Recent work has defined and explored `Lie-Markov models' in the context of phylogenetic modelling \cite{sumner2012lie}.
These are the class of continuous-time Markov models which have the pleasing property of producing substitution matrices which are closed under matrix multiplication \cite{sumner2017multiplicative}.
We have argued this is an important consistency property for a phylogenetic model to possess \cite{sumner2012general} and have shown there is some evidence these models perform better than entrenched standard models on real data sets \cite{woodhams2015new}.

If one considers the definition of Lie-Markov models to be mathematically compelling in its own right, it is an interesting mathematical question to produce a complete enumeration of these models. 
Given the well-developed nature of the associated Lie group theory (see \cite{hall2015lie,stillwell2008naive} for excellent introductions), one might think it is simply a matter of looking in a standard reference to find a list of all Lie matrix groups and reject those which are not Markovian (in the appropriate sense).
However, this approach fails at the very first step.
The problem is that, from the abstract point of view, the classification of Lie groups is performed up to some reasonable isomorphism conditions.
For instance, the classic results of Cartan and Levi provide classifications of the associated Lie algebras via semi-simplicity and root systems.
The obstruction to using this classification to find Lie-Markov models is that this theory is always explored up to similarity transformations.
For applications to Markov chains it is crucial that (i) the Lie algebra is given as a concrete set of matrices (that is, a particular representation is specified), and (ii) these matrices are `stochastic' in the appropriate sense (to be defined below).
Both of these conditions renders the classical approach inappropriate.

In the most abstract setting, two Lie algebras are the isomorphic if there is a linear map between them that respects the Lie bracket multiplication.
However, two Lie-Markov models which are isomorphic as Lie algebras may be very different as Markov models; indeed they need not even be both defined on a state space of the same size.
We give a simple example of this in Sec~\ref{subsec:isodifferent}.
Thus in applications, we need to be able to distinguish Lie-Markov models that abstractly may form the same Lie algebra.
In fact, for a Lie-Markov model, we are less interested in which specific Lie algebra we are dealing with, than the fact that the model \emph{does}  form a Lie algebra (c.f. \cite{sumner2017multiplicative}).

A second approach might be to use the classification theorems for matrix groups and derive Lie-Markov models form these groups.
For example, the hierarchy of classical groups --- the orthogonal, unitary, and symplectic groups.
However, this approach also fails because it is not at all clear when a Lie-Markov model may be lurking in a standard case if only we could find the appropriate similarity transformation which would bring the matrices into stochastic form.

Given that the general Markov model forms a Lie algebra \cite{johnson1985markov}, a third approach may be to proceed by taking arbitrary constraints on this general case and rejecting those that do not form Lie algebras. 
However this approach also fails, since, as was explained in \cite{sumner2012lie}, there exist simple examples of infinite families of Lie-Markov models.
Since the constraints distinguishing them are somewhat arbitrary, the majority of the models in a given infinite family are uninteresting for practical applications, and, ideally, one would like a systematic scheme for picking out interesting cases from the parametrized family of possibilities.
 
In this vein, the approach developed in \cite{sumner2012lie}  rests upon the natural observation that the phylogenetic models that are used in practice have symmetries under nucleotide permutations.
For example, the general time-reversible (GTR) model, which is shown not to be multiplicatively closed in \cite{sumner2012lie}, nonetheless has complete symmetry under necleotide symmetries.
In \cite{sumner2012lie}, we presented a general method which produces all Lie-Markov models which have the symmetries of a given permutation group.
This method was then applied to show there are five Lie-Markov models with complete symmetry.
We also used this approach in \cite{fernandez2015lie} to show there are (approximately) 35 Lie-Markov models with symmetry that respects the partitioning of nucleotides in purines $AG$ and pyrimidines $CT$ (the precise count depends on inclusion/exclusion of some special cases depending upon one's preferences).
Currently, this is most powerful method we know of for systematically deriving Lie-Markov models.

However, an unappealing mathematical feature of this approach is described as follows.
The procedure fixates on the vector-space property of the Lie algebras to reduce the general case into a set of subspaces which individually respect the chosen group of nucleotide symmetries.
A potential Lie-Markov model is then built by taking sums of these subspaces and checking whether the resulting model happens to form a Lie algebra, or not.
If it does, we have a Lie-Markov model, if it doesn't, we forget the proposed model and proceed to the next option.
For the algebraically minded, this is a slightly unsatisfactory state of affairs since there is nothing in this approach which exploits the deep nature of the Lie group associated with each Lie-Markov model: each model either forms a Lie algebra or is doesn't, and there is no general principle guiding us to expect the answer to this question to go either way in an individual case.

It is also interesting to recall that the complete group of nucleotide permutation symmetries imposed in \cite{sumner2012lie}, as well as the more restricted `purine-pyrimidine' symmetries imposed in \cite{fernandez2015lie}, are strong enough that, coincidently, these models form Lie algebras by implication of the stronger property that they form matrix algebras --- we will explain the precise meaning of this in Sec~\ref{sec1} and provide an example illustrating this is not the case for all Lie-Markov models in Sec~\ref{subsec:notalg}. 
Thus one is left wondering whether a richer set of models is perhaps missing from what has been achieved so far.

We recall that the literature also contains two well-known classes of phylogenetic models that form matrix algebras: the so-called `group-based' \cite{semple2003phylogenetics} and `equivariant' \cite{draisma2009ideals} models (see  Sec~\ref{subsec:groupbased} and Sec~\ref{subsec:equivarient}, respectively).
The equivariant models impose an even stronger version of symmetry conditions than has been applied in our work on Lie-Markov models.
The strong symmetries of the equivariant models ensure they are matrix algebras and hence form Lie algebras. 
For this reason, the equivariant models occur naturally in the broader class of Lie-Markov models we have derived through our more general methods. 

Comparatively, the inspiration for the present paper can be seen as a generalization of the group-based models to `semigroup-based' models.
We will show that this natural generalization provides yet another means for deriving Lie-Markov models and we give a complete classification of all semigroup-based models in the cases of binary, three and four (DNA) state models.

\section{Rate matrices, stochasticity and Lie-Markov models}
\label{sec1}

We consider homogeneous continuous-time Markov chains with state space $X\!=\!\{1,2,\ldots,k\}$.
A \emph{rate-matrix} $Q$ is a $k\!\times\!k$ matrix with non-negative off-diagonal entries and zero column-sums.
Given a rate-matrix $Q$, we may compute the corresponding probability transition matrix using the exponential map:
\[
M(t)=e^{Qt},
\]
where the $ji$ entry of $M(t)$ is the conditional probability $\mathbb{P}[i\rightarrow j,\text{ in time }t ]$.


For our purposes, a \emph{model} is, for fixed $k$, a choice of a restricted class of rate-matrices $Q$.
Taking the case of the DNA state space $X\!=\!\{A,G,C,T\}\!\equiv\! \{1,2,3,4\}$ we have $k\!=\!4$.
The ever-popular general time-reversible model GTR \cite{tavare1986} is parametrized by the stationary distribution $\pi=(\pi_i)_{i \in X}$ of the chain  and `relative' rates $s_1,s_2,\ldots,s_6$:
\[
Q_{\text{GTR}}=
\left(
\begin{matrix}
\ast & \pi_1s_1 & \pi_1s_2 & \pi_1s_3 \\
\pi_2s_1 & \ast  & \pi_2s_4 & \pi_2s_5 \\
\pi_3s_2 & \pi_3s_4 & \ast   & \pi_3s_6 \\
\pi_4s_3 & \pi_4s_5 & \pi_4s_6 & \ast,
\end{matrix}
\right),
\]
(where the rows and columns are ordered according to $A,G,C,T$ and the missing entries $\ast$ are determined by the zero column-sum condition). 
The defining feature of this model is that it produces a Markov process that, at equilibrium, is identical to its time-reversed process.
This property is exhibited by observing the matrix entries of this model satisfy the detailed balance conditions $q_{ij}\pi_j=q_{ji}\pi_i$ for all choices $i\neq j$. 
The interpretation is that the rate of a transition $i\rightarrow j$ weighted by probability $\pi_i$ of being in state $i$ is equal to converse rate $j\rightarrow i$ weighted by probability $\pi_j$. 

Another popular phylogenetic model is the HKY model \cite{hasegawa1985} which has the additional constraints $s_1\!=\!s_6\!=\!\kappa$ and $s_2\!=\!s_3\!=\!s_4\!=\!s_5\!=\!1$, giving:
\[
Q_{\text{HKY}}=
\left(
\begin{matrix}
\ast & \pi_1\kappa & \pi_1 & \pi_1 \\
\pi_2\kappa & \ast  & \pi_2 & \pi_2 \\
\pi_3 & \pi_3 & \ast   & \pi_3\kappa \\
\pi_4 & \pi_4 & \pi_4\kappa & \ast.
\end{matrix}
\right).
\]
This model is again time-reversible and is motivated by distinguishing \emph{transitions} (substitutions within purines and pyrimdines, i.e. $A\leftrightarrow G,C\leftrightarrow T$) from \emph{transversions} (substitutions between purines and pyrimidines).

Below we also consider the symmetric model SYM which is obtained by taking $\pi$ to be the uniform distribution, and rescaling so that:
\[
Q_{\text{SYM}}=
\left(
\begin{matrix}
\ast & s_1 & s_2 & s_3 \\
s_1 & \ast  & s_4 & s_5 \\
s_2 & s_4 & \ast   & s_6 \\
s_3 & s_5 & s_6 & \ast.
\end{matrix}
\right).
\]


Roughly speaking, we say that a model is \emph{multiplicatively closed} if, for any two substitution matrices $M_1$ and $M_2$ arising from the model, the matrix product $M_1M_2$ is obtainable from the model.
We will see below that the GTR, HKY, and SYM models are not multiplicatively closed.

An example of a well-known multiplicatively closed model is the sub-model of GTR obtained by setting each relative rate equal to unity (the Felsenstein 81 \cite{felsenstein1981evolutionary} or `equal-input'  \cite{steel2016phylogeny} model):
\[
Q_{\text{F81}}=
\left(
\begin{matrix}
\ast & \pi_1 & \pi_1& \pi_1 \\
\pi_2 & \ast  & \pi_2 & \pi_2 \\
\pi_3 & \pi_3 & \ast   & \pi_3 \\
\pi_4 & \pi_4 & \pi_4 & \ast.
\end{matrix}
\right).
\]

As was discussed in \cite{sumner2012lie}, a sufficient condition for a multiplicatively closed model is that the set of rate-matrices forms a Lie algebra.
Under further reasonable assumptions regarding the construction of a model (essentially the model should be an intersection with an algebraic variety), the Lie algebra condition was more recently shown in \cite{sumner2017multiplicative} to also be necessary.
To state this result precisely, we follow the notation and definitions given in \cite{sumner2017multiplicative}.

For fixed $k$, let $\mathcal{L}^+$ denote the set of $k\times k$ rate-matrices $Q$.
That is, $Q\in \mathcal{L}^+$ if $Q$ has real, non-negative off diagonal entries and zero column-sums.  
We then let $\mathcal{L}\supset \mathcal{L}^+$ denote the set of real $k\times k$ matrices with zero column-sums but with the non-negativity condition relaxed.
A model $\mathcal{R}^+\subseteq \mathcal{L}^+$ is assumed to be expressible as an intersection $\mathcal{R}^+=\mathcal{R}\cap \mathcal{L}^+$ where $\mathcal{R}\subseteq \mathcal{L}$ is an algebraic variety.
This means that $\mathcal{R}$ is defined by some polynomial constraints:
\[
\mathcal{R}=\{Q\in \mathcal{L}:0\!=\!f_1(Q)\!=\!f_2(Q)\!=\ldots =\!f_r(Q)\},
\]
where each $f_i(Q)$ is a polynomial in the entries of the matrix $Q$.
Further, we also assume that $\mathcal{R}$ is the \emph{minimal} algebraic variety satisfying $\mathcal{R}^+=\mathcal{R}\cap \mathcal{L}^+$.

We recall:
\begin{mydef}
\label{def:liealg}
A (matrix) \emph{Lie algebra} is a set $\mathfrak{L}$ of matrices satisfying, for all $A,B\in \mathfrak{L}$ and scalars $\lambda\in \mathbb{R}$, the two conditions:
\begin{enumerate}
\item[(L1)] $A+\lambda B\in \mathfrak{L}$;
\item[(L2)] $\lie{A}{B}:=AB-BA\in \mathfrak{L}$.
\end{enumerate}
\end{mydef}
The first condition states that $\mathfrak{L}$ forms a real vector space under sums and scalar multiplication.
The operation $\lie{A}{B}$ is referred to as the `Lie bracket' or `commutator' and should be thought of as the natural product in the Lie algebra $\mathfrak{L}$.
(In the abstract formulation, a third condition known as the Jacobi identity is not needed here since we are restricting attention to \emph{matrix} Lie algebras only and this condition is automatic in this case.)

\begin{mydef}[\cite{sumner2017multiplicative}]
A model $\mathcal{R}^+=\mathcal{R}\cap \mathcal{L}^+$ is said to be \emph{multiplicative closed} if, for all choices $Q_1,Q_2\in \mathcal{R}^+$ and $t_1,t_2\geq 0$, we have:
\[
\textstyle{\frac{1}{t_1+t_2}}\log(e^{Q_1t_1}e^{Q_2t_2})\in \mathcal{R},
\]
where $\log$ denotes the standard power series for the matrix logarithm. 
\end{mydef}
As a consequence, we have: 
\[
\widehat{Q}:=\textstyle{\frac{1}{t_1+t_2}}\log(e^{Q_1t_1}e^{Q_2t_2})\quad  \implies \quad 
e^{\widehat{Q}(t_1+t_2)}=e^{Q_1t_1}e^{Q_2t_2}.
\] 
Thus, if $M_1\!=\!e^{Q_1t_1}$ and $M_2\!=\!e^{Q_2t_2}$ are derivable from the model, then so is their product $M_1M_2$ (via the rate-matrix $\widehat{Q}$).

One should note that we intentionally do not insist on the stronger condition that $\widehat{Q}\in \mathcal{R}^+$, since there are cases where $\log(e^{Q_1t_1}e^{Q_2t_2})$ has non-negative off-diagonal entries.
The definition is designed so that taking products of substitution matrices and then the matrix $\log$ does not, from the geometric point of view, produce matrices outside of $\mathcal{R}$, which is, reassuringly,  assumed to be minimal.

It follows that:
\begin{thm}[\cite{sumner2017multiplicative}]
A model $\mathcal{R}^+=\mathcal{R}\cap \mathcal{L}^+$ is multiplicative closed if and only if $\mathcal{R}$ forms a Lie algebra.
\end{thm}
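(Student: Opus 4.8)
The plan is to prove the two implications separately, using the Baker--Campbell--Hausdorff (BCH) expansion
\[
\log\!\left(e^{X}e^{Y}\right)=X+Y+\tfrac12\lie{X}{Y}+\tfrac{1}{12}\left(\lie{X}{\lie{X}{Y}}-\lie{Y}{\lie{X}{Y}}\right)+\cdots
\]
as the bridge between the multiplicative side (products of matrix exponentials) and the additive/bracket side (the Lie-algebra operations). Throughout I use that $\mathcal{R}$, being an algebraic variety, is closed, and that by minimality $\mathcal{R}$ is the Zariski closure of $\mathcal{R}^+$.

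For the direction $(\Leftarrow)$, suppose $\mathcal{R}$ is a Lie algebra. Given $Q_1,Q_2\in\mathcal{R}^+\subseteq\mathcal{R}$ and $t_1,t_2\geq0$, the elements $Q_1t_1$ and $Q_2t_2$ lie in $\mathcal{R}$ by (L1), and every term of the BCH series is an iterated Lie bracket of these, hence lies in $\mathcal{R}$ by (L1) and (L2). Since $\mathcal{R}$ is finite-dimensional it is closed, so the partial sums and therefore the limit $\log(e^{Q_1t_1}e^{Q_2t_2})$ lie in $\mathcal{R}$; dividing by $t_1+t_2$ keeps us in $\mathcal{R}$ by (L1). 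The only point needing care is that the logarithm series converges only near the identity, so I would run the argument along the analytic path $s\mapsto\log(e^{sQ_1t_1}e^{sQ_2t_2})$: as $\mathcal{R}$ is then a vector space it is cut out by \emph{linear} functionals, each of which vanishes on the initial segment where BCH converges and hence vanishes wherever the logarithm is defined, giving the claim at $s=1$.

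The substantive direction is $(\Rightarrow)$. First I would recover the vector-space structure (L1). Setting $t_1=at$, $t_2=bt$ with $a,b\geq0$ and letting $t\to0^{+}$, the leading BCH term gives
\[
\frac{1}{(a+b)t}\,\log\!\left(e^{Q_1 a t}e^{Q_2 b t}\right)\longrightarrow \frac{aQ_1+bQ_2}{a+b}\qquad(t\to0^{+}),
\]
so, by closedness of $\mathcal{R}$, every such non-negative combination lies in $\mathcal{R}$; as it is also a rate-matrix it lies in $\mathcal{R}^+$, whence $\mathcal{R}^+$ is convex. The Zariski closure of a convex set with non-empty relative interior in its affine hull is exactly that affine hull, so $\mathcal{R}$ is an \emph{affine} subspace of $\mathcal{L}$. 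To upgrade this to a linear subspace I would use that continuous-time Markov models are invariant under rescaling the overall rate ($Q\mapsto\lambda Q$, $\lambda>0$), equivalently that the zero rate-matrix lies in $\mathcal{R}^+$; multiplicative closure with the second factor set to $0$ then yields $\tfrac{t_1}{t_1+t_2}Q\in\mathcal{R}$, and letting the ratio tend to $0$ places $0\in\mathcal{R}$, so the affine space passes through the origin and is genuinely linear. Finally, for (L2), fix $Q_1,Q_2\in\mathcal{R}^+$ and consider $\gamma(t)=\tfrac{1}{2t}\log(e^{Q_1t}e^{Q_2t})\in\mathcal{R}$ for small $t>0$; by BCH it extends smoothly to $\gamma(0)=\tfrac12(Q_1+Q_2)$ with $\gamma'(0)=\tfrac14\lie{Q_1}{Q_2}$. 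Since $\mathcal{R}$ is now a closed linear space, the difference quotients $\tfrac{1}{t}(\gamma(t)-\gamma(0))$ remain in $\mathcal{R}$, so their limit $\lie{Q_1}{Q_2}$ lies in $\mathcal{R}$; bilinearity of the bracket, together with the fact that $\mathcal{R}^+$ linearly spans $\mathcal{R}$ (its span is a linear, hence Zariski-closed, set containing $\mathcal{R}^+$), then gives $\lie{A}{B}\in\mathcal{R}$ for all $A,B\in\mathcal{R}$.

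I expect the main obstacle to be the step in $(\Rightarrow)$ that upgrades the minimal variety from an \emph{affine} to a genuinely \emph{linear} subspace, i.e.\ pinning down that $0\in\mathcal{R}$: without rate-rescaling invariance a fixed-rate model (such as the equal-input matrices with $\pi$ constrained to be a probability distribution) is multiplicatively closed yet sits in an affine, non-linear slice, so this is precisely the hypothesis that must be isolated and used. The BCH convergence bookkeeping in the $(\Leftarrow)$ direction is a secondary technicality handled by the analytic-continuation argument above.
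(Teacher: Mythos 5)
You should first be aware that the paper you were given never proves this theorem at all: it is quoted verbatim from \cite{sumner2017multiplicative}, so there is no internal proof to compare against, and your Baker--Campbell--Hausdorff strategy is in fact the same mechanism that reference advertises as its method. Your $(\Leftarrow)$ direction is essentially the standard argument and is fine except for one technical overreach: the identity-theorem step only shows the linear functionals vanish on the connected component containing $s=0$ of the domain where $s\mapsto\log\left(e^{sQ_1t_1}e^{sQ_2t_2}\right)$ is defined and analytic. Since the set of $s$ where the paper's ``standard power series'' for the logarithm converges need not be connected, the conclusion ``vanishes wherever the logarithm is defined'' needs more care (restrict the claim to that component, or argue convergence along the whole segment).

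The substantive content is in your $(\Rightarrow)$ direction, and your diagnosis there is not a flaw in your proof but a genuine defect in the statement as transcribed: with multiplicative closure normalized by $\frac{1}{t_1+t_2}$ and no further assumption on the model, the theorem is \emph{false}, and your fixed-rate equal-input example is a valid counterexample. Explicitly, for $Q_i=\Pi_i-\id$ with every column of $\Pi_i$ equal to a probability vector $\pi^{(i)}$, one has $\Pi_i^2=\Pi_i$, $e^{Q_it}=e^{-t}\id+(1-e^{-t})\Pi_i$, and $\Pi_1\Pi_2=\Pi_1$, whence $e^{Q_1t_1}e^{Q_2t_2}=e^{-(t_1+t_2)}\id+(1-e^{-(t_1+t_2)})\Pi_3$ with $\Pi_3$ a convex combination of $\Pi_1,\Pi_2$; therefore $\frac{1}{t_1+t_2}\log\left(e^{Q_1t_1}e^{Q_2t_2}\right)=\Pi_3-\id\in\mathcal{R}^+$, so the model is multiplicatively closed in the sense of this paper's definition, yet its minimal variety $\mathcal{R}$ is the affine slice of equal-input matrices with column parameters summing to $1$ (every element has trace $1-k\neq 0$, so $0\notin\mathcal{R}$), which violates (L1). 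The hypothesis you isolate --- invariance under rescaling $Q\mapsto\lambda Q$, equivalently $0\in\mathcal{R}^+$ --- is exactly what must be added; the cited reference avoids the problem by the way its framework is set up (note, for instance, that an \emph{unnormalized} closure condition $\log\left(e^{Q_1t_1}e^{Q_2t_2}\right)\in\mathcal{R}$ with $t_2=0$ already forces $t_1Q_1\in\mathcal{R}$ for all $t_1\geq0$ and hence $0\in\mathcal{R}$, killing the counterexample), whereas this paper's transcription silently drops that device. Granting the extra hypothesis, the rest of your argument is sound: convexity plus Zariski-closedness gives the affine hull, $0\in\mathcal{R}$ upgrades it to the linear span, the difference quotients of $\gamma(t)=\frac{1}{2t}\log\left(e^{Q_1t}e^{Q_2t}\right)$ give $\frac{1}{4}\lie{Q_1}{Q_2}\in\mathcal{R}$ by closedness, and bilinearity together with $\mathcal{R}=\mathrm{span}(\mathcal{R}^+)$ (from minimality) yields (L2). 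In short, you have proved a corrected form of the theorem by the same route as the source, and the correction you identified is genuinely necessary for the statement as it appears here.
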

For this reason, we refer to $\mathcal{R}^+$ as a Lie-Markov model whenever it is multiplicatively closed.
The most immediate consequence of this result is that a multiplicatively closed model $\mathcal{R}^+$ must be determined by \emph{linear} polynomial constraints $f_i(Q)$, that is, $\mathcal{R}$ is a linear space.

Comparative to the definition of a Lie algebra, consider:
\begin{mydef}
\label{def:matalg}
A \emph{matrix algebra} is a set $\mathcal{A}$ of matrices satisfying, for all $A,B\in \mathcal{A}$ and scalars $\lambda$, the two conditions:
\begin{enumerate}
\item $A+\lambda B\in \mathcal{A}$;
\item $AB\in \mathcal{A}$.
\end{enumerate}
\end{mydef}

As a simple consequence of these definitions:
\begin{lem}
\label{lem:algLie}
Any matrix algebra $\mathcal{A}$ forms a (matrix) Lie algebra under commutators.
\end{lem}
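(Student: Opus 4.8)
The plan is to verify the two defining conditions (L1) and (L2) of Definition~\ref{def:liealg} directly from the two conditions in Definition~\ref{def:matalg}. Since a matrix algebra is by definition closed under sums and scalar multiplication, condition (L1) is inherited verbatim: it is literally identical to the first condition of Definition~\ref{def:matalg}, so $\mathcal{A}$ already forms a real vector space and nothing further needs to be checked for this part.

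The only substantive step is (L2), and even this is immediate. Given $A,B\in\mathcal{A}$, I would first invoke closure under multiplication (the second condition of Definition~\ref{def:matalg}) to conclude that both products $AB$ and $BA$ lie in $\mathcal{A}$. Then, applying closure under linear combinations with scalar $\lambda=-1$, the difference
\[
\lie{A}{B}=AB-BA
\]
also lies in $\mathcal{A}$. This establishes (L2) and completes the verification.

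There is no genuine obstacle here: the result is a purely formal consequence of the fact that the associative product in a matrix algebra, together with vector-space closure, automatically produces a well-defined Lie bracket. The only content worth emphasizing is the logical direction --- a matrix algebra carries \emph{more} structure than a Lie algebra, since it is closed under the full product $AB$ rather than merely the antisymmetrized product $AB-BA$ --- so the implication runs one way only and does not reverse. This asymmetry is precisely what underlies the distinction drawn in the introduction, where it is noted that some Lie-Markov models form Lie algebras without forming matrix algebras.
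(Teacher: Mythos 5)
Your proof is correct and follows essentially the same route as the paper's: note that (L1) is inherited directly, then obtain $\lie{A}{B}\in\mathcal{A}$ from closure under products (giving $AB,BA\in\mathcal{A}$) and closure under linear combinations. The closing remark about the implication being one-directional is a nice observation that matches the paper's own commentary following the lemma, though it is not part of the proof itself.
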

\begin{proof}
We need only check that, for all $A,B\in\mathcal{A}$, we have $\lie{A}{B}\in \mathcal{A}$.
But this follows easily since $AB\in \mathcal{A}$, $BA\in\mathcal{A}$, and $\mathcal{A}$ is closed under summation.
\end{proof}
There are certainly examples of (matrix) Lie algebras that do not form matrix algebras, so the converse is false in general.
We give an example of a Lie-Markov model that does not form a matrix algebra in Sec~\ref{subsec:notalg}.

The GTR and HKY models are not Lie-Markov models since they are implicitly defined by non-linear constraints on their matrix entries and hence fail to satisfy (L1).
For the GTR model these constraints are known to be cubic by Kolomogorov's criterion for detailed balance \cite{kolomogorov1936}.
Similarly, the matrix entries of the HKY model satisfy the constraint $q_{12}q_{23}=q_{21}q_{13}$, and this constraint is not implied by simpler, linear conditions. 
On the other hand the SYM and F81 models clearly satisfy (L1), but only F81 satisfies condition (L2) as well.

To see that the SYM model fails (L2), consider two rate-matrices $Q_1,Q_2\in \text{SYM}$ so $Q_i\!=\!Q_i^T$. 
Also assume $\lie{Q_1}{Q_2} \neq 0$ (such examples certainly exist).
Now consider the transpose of the commutator:
\[
\lie{Q_1}{Q_2}^T=\left(Q_1Q_2-Q_2Q_1\right)^T=Q_2^TQ_1^T-Q_1^TQ_2^T=\lie{Q_2}{Q_1}=-\lie{Q_1}{Q_2},
\]
so $\lie{Q_1}{Q_2}$ is an anti-symmetric matrix and (L2) fails to hold for this model. 

To see that F81 model satisfies condition (L2), we exploit the fact that (L1) states that a Lie-algebra is a vector subspace of matrices and hence has a basis.
Confirming condition (L2) is then be achieved by checking the commutators of all pairs of basis elements.
We define the matrices $\{R_1,R_2,R_3,R_4\}$ via
\[
Q=\left(
\begin{matrix}
\ast & \pi_1 & \pi_1& \pi_1 \\
\pi_2 & \ast  & \pi_2 & \pi_2 \\
\pi_3 & \pi_3 & \ast   & \pi_3 \\
\pi_4 & \pi_4 & \pi_4 & \ast.
\end{matrix}
\right)=\pi_1R_1+\pi_2R_2+\pi_3R_3+\pi_4R_4\in \text{F81}.
\]
Taking $\mathcal{R}_{\text{F81}}=\text{span}_{\mathbb{R}}(R_1,R_2,R_3,R_4)$, explicit computation then shows
\begin{equation}
\label{eq:f81com}
\lie{R_i}{R_j}=R_i-R_j\in \mathcal{R}_{\text{F81}},
\end{equation}
so (L2) is satisfied, as required.
Notice here that  $\lie{R_i}{R_j}$ has some negative off-diagonal entries --- this is why we are required to expand the definition of rate-matrices to have entries from all of $\mathbb{R}$.
A major punchline for the approach we explore in this paper is that we will show how to derive the commutator relations (\ref{eq:f81com}) \emph{without the need to implement any matrix computations}.

In the applied setting, we of course only use stochastic rate-matrices.
Thus an additional feature of the theory is that, given a Lie-Markov model $\mathcal{R}^+$ is defined as the intersection $\mathcal{R}^+=\mathcal{R}\cap\mathcal{L}^+$, in general there are multiple approaches to parametrizing $\mathcal{R}^+$ as a subset of $\mathcal{L}$.
General tools for finding sensible parametrizations are discussed in \cite{fernandez2015lie} and \cite{woodhams2015new}.

\subsection{Model symmetries}

Suppose $\mathcal{R}^+\subset \mathcal{L}^+$ is a Markov model on $k$-states  (not necessarily multiplicatively closed) and $G\leq \mathcal{S}_k$ is a permutation group.
\begin{mydef}
\label{def:symm}
We say that $\mathcal{R}^+$ has $G$-symmetry if $G$ is the maximal permutation group such that, for all $\sigma\in G$, we have:
\[
Q\in \mathcal{R}^+ \implies K_\sigma QK_\sigma^T\in \mathcal{R}^+,
\]
where $K_\sigma$ is the standard $k\times k$ permutation matrix corresponding to $\sigma$.
\end{mydef}
In other words if, according to the permutation $\sigma\in G$, we simultaneously permute the rows and columns of a rate matrix in the model we obtain another rate matrix also in the model.

It is not hard to see that the GTR, SYM, and F81 models have $\mathcal{S}_4$ symmetry, whereas the HKY model has reduced symmetries given by the dihedral group:
\[
D_4=\{e,(12),(34),(12)(34),(13)(24),(14)(23),(1324),(1423)\}.
\] 

Below, in Sec~\ref{subsec:equivarient}, we will discuss the stronger notion of model symmetry  used to define the equivariant models.

We also use this notion to define model equivalence:
\begin{mydef}
Two models 
$\mathcal{R}_1^+,\mathcal{R}_2^+\in \mathcal{L}^+$ on $k$-states are \emph{isomorphic} if there exists a permutation $\sigma\in \mathcal{S}_k$ such that:
\[
Q\in \mathcal{R}_1^+\Longleftrightarrow K_\sigma QK_\sigma^T\in \mathcal{R}_2^+
\]
\end{mydef}

Considering the HKY model, given that the eight permutations $\sigma\in D_4$ are the only symmetries of this model, we see that there should exists $4!/8=3$ isomorphic variants of this model.
These are easily understood as corresponding to the three possible partitionings of nucleotides into two set of two: $AG|CT$, $AT|CG$, and $AC|GT$.

In \cite{sumner2012lie}, we enumerated all the Lie-Markov DNA models with full symmetry $\mathcal{S}_4$ and followed this up in \cite{fernandez2015lie}, by enumerating all the Lie-Markov models with dihedral symmetry $D_4$.
In \cite{woodhams2015new} we then explored the performance of these models on real data sets taking note of the three hierarchies of models that, similarly to the HKY model, arise from the three choices of nucleotide partitions.

\section{Semigroup-based models}
\label{sec:semi}

We begin by showing how to interpret the F81 model as arising from an (abstract) semigroup of degree four.
We then show that this semigroup generalizes to degree $k$ and we similarly obtain the `equal-input' model \cite{steel2016phylogeny} on any number of states $k$. 
We then set up a general framework for deriving semigroup-based models and explain how this is a natural generalization of the notion of a group-based model.

Consider the semigroup $S\!=\!\{a_1,a_2,a_3,a_4\}$ with, for all $i,j\in\{1,2,3,4\}$, multiplication given by $a_ia_j\!=\!a_i$.
To confirm this is indeed semigroup, we need only check that multiplication is associative: $(a_ia_j)a_k\!=\!a_ia_k\!=\!a_i\!=\!a_ia_j\!=\!a_i(a_ja_k)$.

We may represent this semigroup using $4\times 4$ matrices $\{A_1,A_2,A_3,A_4\}$ defined by their action on standard unit vectors $e_i$ mimicking the multiplication $a_ia_j=a_i$  via $A_ie_j=e_i$.
From this we see that $A_i$ is the matrix with 1s on row $i$ and zeros elsewhere.
For example
\[
A_1=
\left(
\begin{matrix}
1 & 1 & 1 & 1 \\
0 & 0 & 0 & 0 \\
0 & 0 & 0 & 0 \\
0 & 0 & 0 & 0 
\end{matrix}
\right),
\]
and $A_iA_j\!=\!A_i$ generally.

We then construct the rate-matrices $R_i:=-\id+A_i\in \mathcal{L}^+$ and see that these are none other than the basis elements for the F81 model given in Section~\ref{sec1}, for example:
\[
R_1=
\left(
\begin{matrix}
0 & \phantom{-}1 & \phantom{-}1 & \phantom{-}1 \\
0 & -1 & \phantom{-}0 & \phantom{-}0 \\
0 & \phantom{-}0 & -1 & \phantom{-}0 \\
0 & \phantom{-}0 & \phantom{-}0 & -1 
\end{matrix}
\right)\in \mathcal{R}^+_{\text{F81}}.
\]
Further, as promised in the previous section, the commutators  follow immediately from the multiplication rule in the semigroup:
\[
\left[R_i,R_j\right]=\left[A_i,A_j\right]=A_iA_j-A_jA_i=A_i-A_j=R_i-R_j,
\]
where, in the first  equality, we have used the fact that commutators are blind to inclusion of scalar multiples of the identity matrix $\id$.

This example immediately generalizes to the degree $k$ semigroup $S=\{a_1,a_2,\ldots,a_k\}$ with multiplication rule $a_ia_j=a_i$.
The Markov model obtained is referred to as the `equal input' model \cite{steel2016phylogeny} with rate matrices:
\[
Q=
\alpha_1R_1+\alpha_2R_2+\ldots+\alpha_kR_k
=
\left(
\begin{matrix}
\ast & \alpha_1 & \alpha_1 & \ldots & \alpha_1 \\
\alpha_2 & \ast & \alpha_2 & \ldots & \alpha_2\\
\vdots\\ 
\alpha_k & \alpha_k & \alpha_k & \ldots &  \ast
\end{matrix}
\right),
\]
and commutators $\left[R_i,R_j\right]=R_i-R_j$.

This procedure generalizes to produce a $k$-state Lie-Markov model from any degree $k$ semigroup.
Our process of converting each semigroup element $a_i$ into a matrix is inspired from taking the `regular representation' of a group.
In detail, suppose $a_i,a_j,a_k\in S$ satisfy $a_k=a_ia_j$.
Then the $j$th column of the matrix $A_i$ has a single non-entry entry 1 in the $k$th row.
It is then clear that $A_k\!=\!A_iA_j$, mimicking the multiplication in the semigroup, and each $L_i\!:=\!-\id+A_i\in \mathcal{L}^+$ is a rate-matrix.
It is important to note however that the resulting map from the semigroup to the matrices $A_i$ is not necessarily injective (as it is for a group).

Given an enumeration of all semigroups of size $k$, our general procedure for producing semigroup-based models with $k$ states is then, for each semigroup $S=\{a_1,a_2,\ldots,a_k\}$:
\begin{enumerate}
\item List the set of $k\times k$ matrices $A_1,A_2,\ldots,A_k$ resulting from the regular representation of $S$;
\item Define the rate-matrices $L_i=-\id+A_i\in \mathcal{L}^+$;
\item Take $\mathcal{R}=\text{span}_\mathbb{R}\left(L_1,L_2,\ldots, L_k\right)$ and define the model $\mathcal{R}^+=\mathcal{R}\cap \mathcal{L}^+$.
\end{enumerate}

We discuss the connection of semigroup-based models to the usual construction of group-based models in the next section.
Presently, we note:
\begin{thm}
Every semigroup-based model is a Lie-Markov model.
\end{thm}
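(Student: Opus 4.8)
The plan is to invoke the earlier Theorem, which asserts that a model $\mathcal{R}^+ = \mathcal{R} \cap \mathcal{L}^+$ is multiplicatively closed (equivalently, a Lie-Markov model) if and only if $\mathcal{R}$ forms a Lie algebra. Since by construction $\mathcal{R} = \text{span}_{\mathbb{R}}(L_1, \ldots, L_k)$ is a real vector space, condition (L1) of Definition~\ref{def:liealg} holds automatically, and each $L_i = -\id + A_i$ is already noted to lie in $\mathcal{L}^+$, so the intersection is well-defined. It remains only to verify condition (L2), namely that $\mathcal{R}$ is closed under the commutator.

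Because the commutator is bilinear and $\mathcal{R}$ is a vector space, it suffices to check (L2) on the spanning set: I would show that $\lie{L_i}{L_j} \in \mathcal{R}$ for every pair $i,j$. Here I would use the two observations already highlighted in the construction of the equal-input model. First, since adding a scalar multiple of $\id$ leaves a commutator unchanged, $\lie{L_i}{L_j} = \lie{A_i}{A_j} = A_iA_j - A_jA_i$. Second --- and this is the crux --- the regular representation is multiplicative: if $a_m = a_ia_j$ and $a_n = a_ja_i$ in $S$, then $A_iA_j = A_m$ and $A_jA_i = A_n$.

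Granting this, the computation closes immediately: $\lie{L_i}{L_j} = A_m - A_n = (-\id + A_m) - (-\id + A_n) = L_m - L_n \in \mathcal{R}$, exactly paralleling the equal-input relation $\lie{R_i}{R_j} = R_i - R_j$ of~(\ref{eq:f81com}). Thus (L2) holds, $\mathcal{R}$ is a Lie algebra, and the Theorem delivers the conclusion that $\mathcal{R}^+$ is a Lie-Markov model.

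The step requiring the most care is the multiplicativity $A_iA_j = A_m$. I would establish it directly from the defining action $A_i e_l = e_p$ whenever $a_p = a_ia_l$, evaluating $A_iA_j e_l$ and invoking associativity $(a_ia_j)a_l = a_i(a_ja_l)$ to identify the result with $A_m e_l$, where $a_m = a_ia_j$. The only subtlety I anticipate is that the assignment $a_i \mapsto A_i$ need not be injective (as the excerpt warns); but this does not obstruct the argument, since closure of the semigroup guarantees that $a_ia_j$ equals \emph{some} $a_m \in S$, so $A_iA_j$ is always one of the listed matrices $A_m$, and hence $\lie{L_i}{L_j}$ is a difference of two spanning elements regardless of any coincidences among the $A_i$.
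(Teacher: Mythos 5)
Your proof is correct, and it rests on exactly the same key fact as the paper's: the regular representation is multiplicative, i.e.\ $a_ia_j=a_m$ implies $A_iA_j=A_m$, which you rightly verify via associativity and rightly observe is untroubled by the non-injectivity of the map $a_i\mapsto A_i$. The difference is in how this fact is cashed out. You verify condition (L2) directly, computing $\lie{L_i}{L_j}=\lie{A_i}{A_j}=A_m-A_n=L_m-L_n\in\mathcal{R}$ via the observation that commutators are blind to multiples of $\id$ --- this is the same calculation the paper performs earlier for the F81/equal-input example. The paper's proof of the theorem instead computes the full matrix product,
\[
L_iL_j=(-\id+A_i)(-\id+A_j)=\id-A_i-A_j+A_k=-L_i-L_j+L_k\in\mathcal{R},
\]
and then invokes Lemma~\ref{lem:algLie} to pass from matrix algebra to Lie algebra. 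The paper's route therefore establishes a strictly stronger conclusion: every semigroup-based model forms a \emph{matrix} algebra, not merely a Lie algebra --- a distinction the paper cares about elsewhere (Sec~\ref{subsec:notalg} exhibits a Lie-Markov model that is not a matrix algebra, so the property is not automatic). Your route is slightly more economical if one only wants the Lie-Markov conclusion, and your explicit verification of $A_iA_j=A_m$ from associativity makes rigorous a step the paper only asserts; but it forgoes the stronger structural fact that comes for free from the paper's one-line product computation.
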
 
\begin{proof}
(L1) is true by construction of $\mathcal{R}$ as a linear span, and (L2) follows by the multiplicative closure in the semigroup $S$.
Indeed, suppose $a_i,a_j\in S$ satisfy $a_ia_j=a_k$, then:
\[
L_iL_j=(-\id+A_i)(-\id+A_j)=\id-A_i-A_j+A_k=-L_i-L_j+L_k\in \mathcal{R},
\]
so, applying Lemma~\ref{lem:algLie}, we see that $\mathcal{R}$ forms a Lie algebra.
\end{proof}

The reader should note that, since the regular representation of a semigroup is not necessarily injective, the matrices $A_1,A_2,\ldots,A_k$ need not be distinct.
Further, as we will see in Sec~\ref{sec:antidiff}, it is also possible that some $A_i\!=\id$ and consequently $L_i\!=\!0$.
Thus, in general $\dim(\mathcal{R})\leq |S|$.
This means that there are examples of non-isomorphic semigroups that produce isomorphic (or even equal) Markov models.
An example of this is given in Sec~\ref{sec:twostate}.

We recall that two semigroups $S,S'$ are \emph{isomorphic} if there exists a bijection $\varphi:S\rightarrow S'$ satisfying, for all $s_1,s_2\in S$:
\[
\varphi(s_1s_2)=\varphi(s_1)\varphi(s_2).
\]
It should be clear that the Markov models produced by two isomorphic semigroups differ only up to a possible permutation of states.
Thus, in what follows, we need only consider non-isomorphic semigroups.

We also recall that two semigroups $S,S'$ are \emph{anti-isomorphic} if there exists a bijection $\overline{\varphi}:S\rightarrow S'$ satisfying, for all $s_1,s_2\in S$:
\[
\overline{\varphi}(s_1s_2)=\overline{\varphi}(s_2)\overline{\varphi}(s_1).
\]
An easy way to construct an anti-isomorphism is to take a semigroup $S$ and then define $S'$ by reversing the multiplication on $S$:
\[
s_1s_2=s_3\text{ in }S\quad\leftrightarrow\quad  s_2s_1=s_3 \text{ in }S'.
\]
In this case it is said that $S'$ is the anti-isomorphic copy of $S$.

When enumerating semigroups it is often the case that anti-isomorphic semigroups are not treated separately.
However, for our purposes, it is the case that the two semigroups can produce radically different Markov models.
We illustrate with the following example.

\subsection{Anti-isomorphic semigroups can produce different Lie-Markov models}
\label{sec:antidiff}
Consider the anti-isomophic copy of the semigroup underlying the F81 model above: $S'=\{a_1,a_2,a_3,a_4\}$ with multiplication rule $a_ia_j\!=\!a_j$.
The regular representation produces the matrices
\[
A_1=
\left(
\begin{matrix}
1 & 0 & 0 & 0 \\
0 & 1 & 0 & 0 \\
0 & 0 & 1 & 0 \\
0 & 0 & 0 & 1 
\end{matrix}
\right)=\id=A_2=A_3=A_4,
\]
which (beyond being non-injective) gives the trivial Lie-Markov model where all rate matrices are zero: $Q\!=\!0$.

Thus, we treat anti-isomorphic semigroups independently in what follows.

\subsection{Different Lie-Markov models can form isomorphic Lie algebras}
\label{subsec:isodifferent}
Consider the general $2$-state Markov model:
\[
Q=
\left(
\begin{matrix}
-\alpha & \phantom{-}\beta \\
\phantom{-}\alpha & -\beta 
\end{matrix}
\right)
=\alpha\left(
\begin{matrix}
-1 & 0 \\
\phantom{-}1 & 0 
\end{matrix}
\right)+\beta\left(
\begin{matrix}
0 & \phantom{-}1 \\
0 & -1 
\end{matrix}
\right)=\alpha L_1+\beta L_2.
\]
This is a Lie-Markov model since it forms a matrix algebra:
\[
L_1^2=-L_1,\quad L_1L_2=-L_2,\quad L_2L_1=-L_1,\quad L_2^2=-L_2,
\]
and hence, following Lemma~\ref{lem:algLie}, a Lie-algebra:
\begin{equation}
\label{eq:affine}
\lie{L_1}{L_2}=L_1-L_2.\nonumber
\end{equation}
The geometric consequences of the identification of the Lie algebra underlying this model are explored in \cite{sumner2013lie}.

We will not review the construction, but an interesting 3-state model arises from the $2$-state general  Markov model using the method given in \cite{jarvis2012markov}.
This model has rate matrices given by 
\[
Q'=
\left(
\begin{matrix}
-2\alpha & \beta & 0 \\
\phantom{-}2\alpha & -\alpha-\beta & \phantom{-}2\beta\\
0 & \alpha & -2\beta
\end{matrix}
\right)
=
\alpha\left(
\begin{matrix}
-2 & \phantom{-}0 & 0 \\
\phantom{-}2 & -1 & 0 \\
\phantom{-}0 & \phantom{-}1 & 0
\end{matrix}
\right)
+\beta
\left(
\begin{matrix}
0 & \phantom{-}1 & \phantom{-}0 \\
0 & -1 & \phantom{-}2 \\
0 & \phantom{-}0 & -2
\end{matrix}
\right)
= \alpha L'_1+\beta L'_2
\]
satisfying 
\[
\lie{L'_1}{L'_2}=L'_1-L'_2.
\]

Since these two models define two-dimensional Lie algebras satisfying the same commutator relations, they are isomorphic as Lie algebras.
In fact, for \emph{any} number of character states $k$, the method given in \cite{jarvis2012markov} produces a two-dimensional Lie-Markov model which is isomorphic, as a Lie algebra, to the 2-state general Markov model.
This illustrates that Lie algebra isomorphism is not (in itself) a useful tool for identifying distinct Lie-Markov models.

\subsection{Not all Lie-Markov models form matrix algebras}
\label{subsec:notalg}

For completeness of discussion, we give an example of a Lie-Markov model that forms a Lie algebra (Def~\ref{def:liealg}) without satisfying the stronger condition of forming a matrix algebra (Def~\ref{def:matalg}).
In fact, the 3-state example from the previous section is sufficient.
%

Consider the matrix product:
\[
L'_1L'_2=
\left(
\begin{matrix}
0 & -2 & \phantom{-}0 \\
0 & \phantom{-}3 & -2 \\
0 & -1 & \phantom{-}2
\end{matrix}
\right).
\]
This is clearly not expressible as a linear combination of $L_1'$ and $L_2'$.
Thus we see that there exists examples of Lie-Markov models which do \emph{not} form matrix algebras.

\subsection{Equivarient models}
\label{subsec:equivarient}

As mentioned in the introduction, an important class of Markov models that form matrix algebras (and hence are Lie-Markov models) are the `equivariant' models \cite{draisma2009ideals}.
These models were originally defined as sets of substitution matrices, but the idea is easily translated into the setting of rate matrices, as was described in \cite{sumner2012lie} and reproduced presently.

Fix a permutation group $G\leq \mathcal{S}_k$.
The equivariant model corresponding to $G$ is then obtained by taking the set of rate matrices $Q$ which are invariant under simultaneous row and column permutations by $\sigma\in G$.
Concretely, if $\sigma\in G$ is a permutation and $K_\sigma$ is the corresponding $k\times k$ permutation matrix, then the rate matrices $Q$ in the equivariant model have the defining feature
\begin{equation}
\label{eq:equivariant}
K_\sigma QK_\sigma^T=Q.\nonumber
\end{equation}

A simple consequence is that each equivariant model forms a matrix algebra via
\[
K_\sigma(Q_1Q_2)K_\sigma^T=(K_\sigma Q_1K_\sigma^T)(K_\sigma Q_2K_\sigma^T)=Q_1Q_2.
\]
Hence, following Lemma~\ref{lem:algLie}, the equivariant models form Lie algebras and are therefore Lie-Markov models.

For example, if we take the group of dihedral permutations 
\[
D_4=\{e,(12),(34),(12)(34),(13)(24),(14)(23),(1324),(1423)\}
\] 
(the symmetries of a square), we obtain the Kimura 2 parameter model \cite{kimura1980simple} as an equivariant model:
\[
Q_{\text{K2ST}}
=
\left(
\begin{matrix}
\ast & \alpha & \beta & \beta \\
\alpha & \ast & \beta & \beta\\
\beta & \beta & \ast & \alpha\\
\beta & \beta & \alpha & \ast
\end{matrix}
\right).
\]


\subsection{Connection to group-based models}
\label{subsec:groupbased}

Here we describe the usual construction of group-based models \cite{semple2003phylogenetics}.
We recall that the construction of a group-based model is usually thought of being valid only for \emph{abelian} (finite)  groups.  
However, following \cite{sumner2012lie}, we reinterpret the construction using the concept of the regular representation of a group and show how this allows us to construct models for general, possibly non-abelian, (finite) groups.
As in the case of the equivariant models, the construction can be implemented using a substitution or rate-matrix formulation and the results are equivalent.

Given a finite abelian group $G$, consider the state space $X\!=\!G$ and fix a linear function $f\!:\!G\rightarrow \mathbb{R}_{\geq 0}$ denoted as $f(g)\!=\!\alpha_g$ for all $g\in G$.
Using additive notation in $G$, for each pair $i,j\in X\!=\!G$ let the rate of substitution $i\rightarrow j$ be given by $f(i-j)$ where $i-j\in G$. 
Then we construct the rate-matrix $Q$ with off-diagonal entries $Q_{ij}=f(i-j)$ and diagonal entries determined by the zero sum condition.

Repeating what is done in \cite{sumner2012lie}, we show that this concept naturally extends to a general finite group $G$ (not necessarily abelian) by invoking the concept of the regular representation.
We recall that the regular representation of $G$ is given by mapping each $g\in G$ to a $|G|\times |G|$ (permutation) matrix $K(g)$ by setting the entry corresponding to each pair $g_1,g_2\in G$ equal to 1 if $g_1\!=\!gg_2$ and equal to 0 otherwise.
These matrices then satisfy the rule $K(g)K(g')=K(gg')$ and, as above, we may define the rate-matrices
\[
L_g=-\id+K(g),
\] 
which naturally form a matrix (and hence Lie) algebra:
\[
L_gL_{g'}=\id-K(g)-K(g')+K(gg')=-L_g-L_{g'}+L_{gg'}.
\]

If $G$ is abelian, it is not hard to show we obtain exactly the group-based model corresponding to $G$.
However, under this construction it is no longer necessary for $G$ to be abelian.

For example, consider the Klein 4-group:
\[
V_4=\{e,(12)(34),(13)(24),(14)(23)\}.
\]
This group produces the well-known Kimura 3ST model \cite{kimura1981estimation}:
\[
\left(
\begin{matrix}
\ast & \alpha & \beta & \delta \\
\alpha & \ast & \delta & \beta\\
\beta & \delta & \ast & \alpha\\
\delta & \beta & \alpha & \ast
\end{matrix}
\right).
\]

As the simplest non-abelian example, we set $G=\mathcal{S}_3$, the symmetric group on three elements, and obtain the six-state Markov model with rate matrices:
\[
Q=
\alpha_{(12)}L_{(12)}+\ldots +\alpha_{(132)}L_{(132)}
=
\left(
\begin{matrix}
\ast & \alpha_{(12)} & \alpha_{(132)} & \alpha_{(123)} & \alpha_{(23)} & \alpha_{(13)} \\  
\alpha_{(12)} & \ast & \alpha_{(23)} & \alpha_{(13)} & \alpha_{(132)} & \alpha_{(123)}  \\
\alpha_{(123)} & \alpha_{(23)} & \ast & \alpha_{(132)} & \alpha_{(13)} & \alpha_{(12)}\\ 
\alpha_{(132)} & \alpha_{(13)} & \alpha_{(123)} & \ast & \alpha_{(12)} & \alpha_{(23)}  \\
\alpha_{(23)} & \alpha_{(123)} & \alpha_{(13)} & \alpha_{(12)} & \ast & \alpha_{(132)}\\
\alpha_{(13)} & \alpha_{(132)} & \alpha_{(12)} & \alpha_{(23)} & \alpha_{(123)} & \ast
\end{matrix}
\right),
\]
and commutators
\[
\left[L_\sigma, L_{\sigma'}\right]=L_{\sigma\sigma'}-L_{\sigma'\sigma},
\]
for all choices $\sigma,\sigma'\in \mathcal{S}_3$.

Although this generalization to non-abelian groups is mathematically appealing, it is not of much use in phylogenetics with $k\!=\!4$ DNA states, since there are exactly two group of degree 4: $C_4$ and $C_2\times C_2$, both of which are abelian, and hence are already obtainable using the standard approach to group-based models. 
However, the astute reader will have noticed that there is nothing in the above that uses the availability of algebraic inverses in the group $G$.
Hence we may generalize immediately to semigroups $S$ and obtain exactly the construction given above for deriving semigroup-based models.

\section{Results}

\subsection{Four-state semigroup models}

In this section we present our results of exploring the semigroup-based models derived from degree-four semigroups.
There are 188 degree-four non-isomorphic semigroups with 126 of these being neither isomorphic or anti-isomorphic \cite{forsythe1955swac} (in other words there are 62 semigroups that occur as anti-isomorphic pairs). 
We implemented our procedure for deriving semigroup-based models to all 188 semigroups and then removed those models which are the same under permutation of nucleotide states.
From this process we produced 131 distinct Lie-Markov models.
However, most of these models are reducible and/or have absorbing states as Markov chains and hence are not of interest for our motivations in phylogenetic modelling.
All of these models are presented in the supplementary material.
To illustrate, examples of semigroup-based models with an absorbing state are given in the two- and three-state results below.

After removal of these uninteresting cases, we found:

\begin{res}
There are precisely \emph{four} distinct non-reducible, non-absorbing four-state Lie-Markov models derivable from degree-four semigroups.
Specially:
\begin{itemize}
\item The F81 model (discussed above);
\item The Kimura 3ST model (group-based model discussed above);
\item Model 3.3b (from the previously presented Lie-Markov model hierarchy \cite{fernandez2015lie});
\item A new model not previously observed.
\end{itemize}
\end{res}

\medskip
\noindent
\underline{Model 3.3b}:
We recall that the rate-matrices in Model 3.3b are expressible in the form
\[
Q_{3.3b}=
\left(
\begin{matrix}
\ast & \alpha & \beta & \gamma \\
\alpha & \ast & \gamma & \beta\\
\gamma & \beta & \ast & \alpha\\
\beta & \gamma & \alpha & \ast
\end{matrix}
\right),
\]
which can be understood as the `twisted' cousin to the Kimura 3 parameter model.
We were not previously aware that this is a semigroup-based model and it is amusing to `reverse-engineer' it to find it is based on the semigroup with multiplication table:
\[
\begin{tabular}{c|cccc}
    & $a_1$ & $a_2$ & $a_3$ & $a_4$  \\
    \hline
    $a_1$ & $a_2$ & $a_1$ & $a_4$ & $a_3$  \\
    $a_2$ & $a_4$ & $a_3$ & $a_1$ & $a_2$  \\
    $a_3$ & $a_3$ & $a_4$ & $a_2$ & $a_1$  \\
    $a_4$ & $a_1$ & $a_2$ & $a_3$ & $a_4$  
      \end{tabular}.
\]

This model has the symmetries of the dihedral group and hence respects the partitioning of nucleotides into purine and pyrimidines.
This also tells us that there are three distinct (but isomorphic) copies of this model corresponding to the three possible partitionings:  $AG|CT$, $AT|CG$ and $AC|GT$.

\medskip
\noindent
\underline{New semigroup-based model}:
On the other hand, the model previously unknown to us is based on the semigroup
\[
\begin{tabular}{c|cccc}
    & $a_1$ & $a_2$ & $a_3$ & $a_4$  \\
    \hline
    $a_1$ & $a_1$ & $a_1$ & $a_3$ & $a_3$  \\
    $a_2$ & $a_2$ & $a_2$ & $a_4$ & $a_4$  \\
    $a_3$ & $a_3$ & $a_3$ & $a_1$ & $a_1$  \\
    $a_4$ & $a_4$ & $a_4$ & $a_2$ & $a_2$  
      \end{tabular},
\]
and has rate matrices of the form
\[
Q=
\left(
\begin{matrix}
\ast & \alpha & \gamma & \gamma \\
\beta & \ast & \delta & \delta\\
\gamma & \gamma & \ast & \alpha\\
\delta & \delta & \beta & \ast
\end{matrix}
\right)
=
\alpha L_1+\beta L_2+\gamma L_3 +\delta L_4,
\]
with commutators
\[
\begin{tabular}{ccc}
$\lie{L_1}{L_2}=L_1-L_2$, & $\lie{L_1}{L_3}=0$, & $\lie{L_1}{L_4}=L_3-L_4$,\\
$\lie{L_2}{L_3}=L_4-L_3$,  & $\lie{L_2}{L_4}=0$, & $\lie{L_3}{L_4}=L_1-L_2$,
\end{tabular}
\]
(as can be read directly off the semigroup multiplication table).

Direct computation shows that the symmetry group  of this model is the Klein 4-group:
\[
V_4=\{e,(12)(34),(13)(24),(14)(23)\}.
\]
This means there are $6=4!/4$ distinct (but isomorphic) variants of this model to test on real data sets.

\subsection{Two-state semigroup models}
\label{sec:twostate}
In the case of $k\!=\!2$ binary states, there are precisely five non-isomorphic semigroups.
We denote each as $S=\{a_1,a_2\}$.
\medskip

\noindent
\underline{Semigroup 1}: 
\[
\begin{tabular}{c|cc}
    & $a_1$ & $a_2$ \\
    \hline
    $a_1$ & $a_1$ & $a_1$   \\
    $a_2$ & $a_1$ & $a_1$   
      \end{tabular}.
      \]
Taking the regular representation produces the matrices 
\[
A_1=
\left(
\begin{matrix}
1 & 1 \\
0 & 0
\end{matrix}
\right)
,\quad 
A_2=
\left(
\begin{matrix}
1 & 1 \\
0 & 0
\end{matrix}
\right).
\]
Thus the regular representation is not injective and produces the one-dimensional Lie-Markov model with an absorbing state and rate-matrices given by:
\[
Q=
\left(
\begin{matrix}
0 & \alpha \\
0 & -\alpha
\end{matrix}
\right).
\]

Since this model treats the two states very differently, it has trivial symmetry group $\{e\}<\mathcal{S}_2$.

\medskip
\noindent
\underline{Semigroup 2}: 
\[
\begin{tabular}{c|cc}
    & $a_1$ & $a_2$ \\
    \hline
    $a_1$ & $a_1$ & $a_1$   \\
    $a_2$ & $a_1$ & $a_2$   
      \end{tabular}
\]
Taking the regular representation produces the matrices 
\[
A_1=
\left(
\begin{matrix}
1 & 1 \\
0 & 0
\end{matrix}
\right)
,\quad 
A_2=
\left(
\begin{matrix}
1 & 0 \\
0 & 1
\end{matrix}
\right).
\]
This time the representation is injective, but, since $-\id+A_2\!=\!0$, this semigroup produces exactly the same Lie-Markov model as the previous case.

\medskip
\noindent
\underline{Semigroup 3}:  
\[
\begin{tabular}{c|cc}
    & $a_1$ & $a_2$ \\
    \hline
    $a_1$ & $a_1$ & $a_1$   \\
    $a_2$ & $a_2$ & $a_2$   
      \end{tabular}
\]
Taking the regular representation produces the matrices 
\[
A_1=
\left(
\begin{matrix}
1 & 1 \\
0 & 0
\end{matrix}
\right)
,\quad 
A_2=
\left(
\begin{matrix}
0 & 0 \\
1 & 1
\end{matrix}
\right).
\]
The resulting Markov model can equivalently be understood as producing the 2-state equal input model or the 2-state general Markov model (as discussed in Sec~\ref{sec:semi} and Sec~\ref{subsec:isodifferent}, respectively).
This model has full symmetry $\mathcal{S}_2$.

\medskip
\noindent
\underline{Semigroup 4}:
\[
\begin{tabular}{c|cc}
    & $a_1$ & $a_2$ \\
    \hline
    $a_1$ & $a_1$ & $a_2$   \\
    $a_2$ & $a_1$ & $a_2$   
      \end{tabular}
\]
This is the anti-isomorphic copy of the previous case.

Taking the regular representation produces the matrices 
\[
A_1=
\left(
\begin{matrix}
1 & 0 \\
0 & 1
\end{matrix}
\right)
,\quad 
A_2=
\left(
\begin{matrix}
1 & 0 \\
0 & 1
\end{matrix}
\right),
\]
which is not injective and produces the trivial Lie-Markov model with vanishing rate matrices $Q\!=\!0$.

\medskip
\noindent
\underline{Semigroup 5}: 
\[
\begin{tabular}{c|cc}
    & $a_1$ & $a_2$ \\
    \hline
    $a_1$ & $a_1$ & $a_2$   \\
    $a_2$ & $a_2$ & $a_1$   
      \end{tabular}
\]
This is the (abelian) cyclic group on two elements $C_2\cong \mathbb{Z}_2$.
Taking the regular representation produces the matrices 
\[
A_1=
\left(
\begin{matrix}
1 & 0 \\
0 & 1
\end{matrix}
\right)
,\quad 
A_2=
\left(
\begin{matrix}
0 & 1 \\
1 & 0
\end{matrix}
\right),
\]
which  produces the group-based model
\[
Q=
\alpha L_1+\beta L_2
=
\left(
\begin{matrix}
-\alpha & \phantom{-}\alpha \\
\phantom{-}\alpha & -\alpha
\end{matrix}
\right),
\]
also known as the `binary symmetric' model.
This is a one-dimensional Lie-Markov model with trivial commutator relations.
This model has full symmetry $\mathcal{S}_2$.

\subsection{Three-state semigroup models}

There are 24 non-isomorphic semigroups with degree 3.
The 24 cases split into 12 which are `self' anti-isomorphic plus 6 anti-isomorphic pairs.
Out the 24 possibilities, we found that there are \emph{two} semigroup based models which form non-reducible Markov chains.
We provide a complete set of results in the supplementary material and simply present the interesting cases here.

\medskip
\noindent
\underline{Example 1}: 
\[
\begin{tabular}{c|ccc}
    & $a_1$ & $a_2$ & $a_3$ \\
    \hline
    $a_1$ & $a_1$ & $a_1$ & $a_1$   \\
    $a_2$ & $a_2$ & $a_2$ & $a_2$  \\
    $a_3$ & $a_3$ & $a_3$ & $a_3$
      \end{tabular}
\]
Taking the regular representation produces the matrices 
\[
A_1=
\left(
\begin{matrix}
1 & 1 & 1 \\
0 & 0 & 0\\
0 & 0 & 0
\end{matrix}
\right)
,\quad 
A_2=
\left(
\begin{matrix}
0 & 0 & 0 \\
1 & 1 & 1\\
0 & 0 & 0
\end{matrix}
\right)
,\quad 
A_3=
\left(
\begin{matrix}
0 & 0 & 0 \\
0 & 0 & 0\\
1 & 1 & 1
\end{matrix}
\right),
\]
which  produces the three-state equal-input model:
\[
Q=
\alpha R_1+\beta R_2+\gamma R_3
=
\left(
\begin{matrix}
-\beta-\gamma & \alpha & \alpha \\
\beta & -\alpha-\gamma & \beta\\
\gamma & \gamma & -\alpha-\beta
\end{matrix}
\right),
\]
forming a matrix (and hence Lie) algebra with relations:
\[
R_iR_j=-R_j \implies \left[R_i,R_j\right]=R_i-R_j.
\]

This model has full symmetry $\mathcal{S}_3$.

\medskip
\noindent
\underline{Example 2}: 
\[
\begin{tabular}{c|ccc}
    & $a_1$ & $a_2$ & $a_3$ \\
    \hline
    $a_1$ & $a_1$ & $a_2$ & $a_3$   \\
    $a_2$ & $a_2$ & $a_3$ & $a_1$  \\
    $a_3$ & $a_3$ & $a_1$ & $a_2$
      \end{tabular}
\]

We recognise this as the cyclic group $C_3$ with generator $a_2$ satisfying $a_3=a_2^2$ and identity element $a_1=a_2^3$.
Taking the regular representation produces the permutation matrices 
\[
A_1=
\left(
\begin{matrix}
1 & 0 & 0 \\
0 & 1 & 0\\
0 & 0 & 1
\end{matrix}
\right)
,\quad 
A_2=
\left(
\begin{matrix}
0 & 0 & 1 \\
1 & 0 & 0\\
0 & 1 & 0
\end{matrix}
\right)
,\quad 
A_3=
\left(
\begin{matrix}
0 & 1 & 0 \\
0 & 0 & 1\\
1 & 0 & 0
\end{matrix}
\right).
\]
In this case, $L_1=-\id+A_1=0$, so we obtain the two-dimensional Lie-Markov model:
\[
Q=
\alpha L_2+\beta L_3
=
\left(
\begin{matrix}
-\alpha-\beta & \alpha & \beta \\
\beta & -\alpha-\beta & \alpha\\
\alpha & \beta & -\alpha-\beta
\end{matrix}
\right).
\]

We can also interpret this as the group-based model obtained from the group $\mathbb{Z}_3\cong C_3$ with matrix entries satisfying $Q_{ij}=f(i-j)$ where the row and column indices $i,j\in \mathbb{Z}_3$, $f(1)=\beta$ and $f(2)=\alpha$.

This model has full symmetry $\mathcal{S}_3$.

\medskip
\noindent
\underline{Example 3}:
Illustrating another example of a semigroup-based model with an absorbing state, consider the semigroup:
\[
\begin{tabular}{c|ccc}
    & $a_1$ & $a_2$ & $a_3$ \\
    \hline
    $a_1$ & $a_1$ & $a_1$ & $a_3$   \\
    $a_2$ & $a_2$ & $a_2$ & $a_3$  \\
    $a_3$ & $a_3$ & $a_3$ & $a_3$
      \end{tabular}
\]
Taking the regular representation produces the matrices:
\[
A_1=
\left(
\begin{matrix}
1 & 1 & 0 \\
0 & 0 & 0 \\
0 & 0 & 1
\end{matrix}
\right)
,\quad 
A_2=
\left(
\begin{matrix}
0 & 0 & 0 \\
1 & 1 & 0\\
0 & 0 & 1
\end{matrix}
\right)
,\quad 
A_3=
\left(
\begin{matrix}
0 & 0 & 0 \\
0 & 0 & 0\\
1 & 1 & 1
\end{matrix}
\right),
\]
and hence the semigroup-based model:
\[
Q=\alpha L_1+\beta L_2+\gamma L_3
=
\left(
\begin{matrix}
-\beta-\gamma & \alpha & 0 \\
\beta & -\alpha-\gamma & 0\\
\gamma & \gamma & 0
\end{matrix}
\right),
\]
for which the third state is an absorbing state.
The resulting Lie algebra has commutators:
\[
\left[L_1,L_2\right]=L_1-L_2,\quad \left[L_1,L_3\right]=0,\quad \left[L_2,L_3\right]=0.
\]

Due to the distinguished third state, this model has symmetry group $\{e,(12)\}<\mathcal{S}_3$.

\section{Discussion}

We did not attempt to derive semigroup-based models for any more than the $k\!=\!4$ state case.
One obstruction to going further is the combinatorial explosion in the number non-isomorphic semigroups of degree $k$ (OEIS sequence A027851): 
\[
1,5,24,188,1915,28634,1627672,3684030417,\ldots .
\]
Clearly it becomes computationally infeasible to systematically explore all possible cases as we have done in this paper.
One possibile method to proceed further, is to reject all those semigroups that do not satisfy a reasonable set of permutation symmetries.
For a semigroup $S$, the appropriate definition of a symmetry corresponds to the existence of an automorphism (self isomorphism) $\varphi: S\rightarrow S$.
It is clear that the reasonable semigroup-based models we have derived in this paper are based on semigroups with non-trivial automorphism groups.
However, the precise connection to the reasonableness of the so-derived Lie-Markov model is not clear and we leave this as a matter for future work.

Another reason we did not extend our constructions further beyond the $k\!=\!4$ case is due to our particular interest in phylogenetics and DNA substitution models.
Thus, although the primary focus in this paper was mathematical, it is worthwhile to ponder whether the new 4-state model we obtained is useful for the analysis of real data sets.
We also leave this for future work.

\subsubsection*{Acknowledgement}

This research was supported by Australian Research Council (ARC) Discovery Grant DP150100088. 

\bibliographystyle{plain}
\bibliography{biblio}

\begin{thebibliography}{10}

\bibitem{draisma2009ideals}
Jan Draisma and Jochen Kuttler.
\newblock On the ideals of equivariant tree models.
\newblock {\em Mathematische Annalen}, 344(3):619--644, 2009.

\bibitem{felsenstein1981evolutionary}
Joseph Felsenstein.
\newblock Evolutionary trees from {DNA} sequences: a maximum likelihood approach.
\newblock {\em Journal of molecular evolution}, 17(6):368--376, 1981.

\bibitem{fernandez2015lie}
Jes{\'u}s Fern{\'a}ndez-S{\'a}nchez, Jeremy~G Sumner, Peter~D Jarvis, and
  Michael~D Woodhams.
\newblock Lie markov models with purine/pyrimidine symmetry.
\newblock {\em Journal of mathematical biology}, 70(4):855--891, 2015.

\bibitem{forsythe1955swac}
George~E Forsythe.
\newblock {SWAC} computes 126 distinct semigroups of order 4.
\newblock {\em Proceedings of the American Mathematical Society},
  6(3):443--447, 1955.

\bibitem{hall2015lie}
Brian~C Hall.
\newblock {\em Lie groups, Lie algebras, and representations: an elementary
  introduction}.
\newblock Springer, 2015.

\bibitem{hasegawa1985}
M.~Hasegawa, H.~Kishino, and T.~Yano.
\newblock Dating of human-ape splitting by a molecular clock of mitochondrial
  {DNA}.
\newblock {\em J. Mol. Evol.}, 22:160--174, 1985.

\bibitem{jarvis2012markov}
Peter Jarvis and Jeremy Sumner.
\newblock Markov invariants for phylogenetic rate matrices derived from
  embedded submodels.
\newblock {\em IEEE/ACM Transactions on Computational Biology and
  Bioinformatics}, 9(3):828--836, 2012.

\bibitem{johnson1985markov}
Joseph~E Johnson.
\newblock Markov-type lie groups in {$GL(n,R)$}.
\newblock {\em Journal of mathematical physics}, 26(2):252--257, 1985.

\bibitem{kimura1980simple}
Motoo Kimura.
\newblock A simple method for estimating evolutionary rates of base
  substitutions through comparative studies of nucleotide sequences.
\newblock {\em Journal of molecular evolution}, 16(2):111--120, 1980.

\bibitem{kimura1981estimation}
Motoo Kimura.
\newblock Estimation of evolutionary distances between homologous nucleotide
  sequences.
\newblock {\em Proceedings of the National Academy of Sciences},
  78(1):454--458, 1981.

\bibitem{kolomogorov1936}
A.~Kolmogorov.
\newblock Zur theorie der markoffschen ketten.
\newblock {\em Math. Ann.}, 112:155--160, 1936.

\bibitem{semple2003phylogenetics}
Charles Semple and Mike~A Steel.
\newblock {\em Phylogenetics}, volume~24.
\newblock Oxford University Press on Demand, 2003.

\bibitem{steel2016phylogeny}
Mike Steel.
\newblock {\em Phylogeny: Discrete and random processes in evolution}.
\newblock SIAM, 2016.

\bibitem{stillwell2008naive}
John Stillwell.
\newblock {\em Naive lie theory}.
\newblock Springer Science \& Business Media, 2008.

\bibitem{sumner2013lie}
Jeremy~G. Sumner.
\newblock Lie geometry of {$2\times2$} markov matrices.
\newblock {\em Journal of theoretical biology}, 327(21):88--90, 2013.

\bibitem{sumner2017multiplicative}
Jeremy~G. Sumner.
\newblock Mutliplicatively closed markov models must form lie algebras.
\newblock {\em ANZIAM J, \textit{to appear}, arXiv:1704.01418}, 2017.

\bibitem{sumner2012lie}
Jeremy~G Sumner, Jes{\'u}s Fern{\'a}ndez-S{\'a}nchez, and Peter~D Jarvis.
\newblock Lie markov models.
\newblock {\em Journal of theoretical biology}, 298:16--31, 2012.

\bibitem{sumner2012general}
Jeremy~G Sumner, Peter~D Jarvis, Jes{\'u}s Fern{\'a}ndez-S{\'a}nchez, Bodie~T
  Kaine, Michael~D Woodhams, and Barbara~R Holland.
\newblock Is the general time-reversible model bad for molecular phylogenetics?
\newblock {\em Systematic biology}, 61(6):1069--1074, 2012.

\bibitem{tavare1986}
S.~Tavar\'e.
\newblock {Some Probabilistic and Statistical Problems in the Analysis of DNA
  Sequences}.
\newblock {\em Lectures on Mathematics in the Life Sciences (American
  Society)}, 17:57--86, 1986.

\bibitem{woodhams2015new}
Michael~D Woodhams, Jes{\'u}s Fern{\'a}ndez-S{\'a}nchez, and Jeremy~G Sumner.
\newblock A new hierarchy of phylogenetic models consistent with heterogeneous
  substitution rates.
\newblock {\em Systematic biology}, 64(4):638--650, 2015.

\end{thebibliography}

\end{document}